\newtheorem{defn}{Definition}[section]
\newtheorem{prop}[defn]{Proposition}
\newtheorem{theorem}[defn]{Theorem}
\newtheorem{cor}[defn]{Corollary}
\newtheorem{conj}[defn]{Conjecture}
\newcommand{\real}{\mathbb{R}}
\newcommand{\Complex}{\mathbb{C}}
\renewcommand{\pod}[1]{\mathchoice
  {\allowbreak \if@display \mkern 18mu\else \mkern 8mu\fi (#1)}
  {\allowbreak \if@display \mkern 18mu\else \mkern 8mu\fi (#1)}
  {\mkern4mu(#1)}
  {\mkern4mu(#1)}
}
\begin{document}

\title{On the Unimodality of Independence Polynomials \\ of Very Well-Covered Graphs}
\author{J.~I.~Brown\footnote{Communicating author.  Jason.Brown@dal.ca}~ and B. Cameron\\
Department of Mathematics and Statistics \\
Dalhousie University, Halifax, NS B3H 3J5, Canada\\
\date{}
}
\maketitle


\begin{abstract}
The independence polynomial $i(G,x)$ of a graph $G$ is the generating function of the numbers of independent sets of each size. A graph of order $n$ is very well-covered if every maximal independent set has size $n/2$. Levit and Mandrescu conjectured that the independence polynomial of every very well-covered graph is unimodal (that is, the sequence of coefficients is nondecreasing, then nonincreasing). In this article we show that every graph is embeddable as an induced subgraph of a very well-covered graph whose independence polynomial is unimodal, by considering the location of the roots of such polynomials.
\end{abstract}
\maketitle

\section{Introduction}
A subset $S$ of the vertex set of a (finite, undirected) graph $G$  is said to be {\it independent} if $S$ induces a graph with no edges. The {\it independence polynomial} of a graph $G$ is defined to be 
\[ \displaystyle{i(G,x)=\sum_{k=0}^{\alpha}i_kx^k},\]
where $i_k$ is the number of independent sets of size $k$ in $G$ and $\alpha = \alpha(G)$, the {\it independence number} of $G$, is the size of the largest independent set in $G$. 
The independence polynomial is the generating function of the {\it independence sequence} $\langle i_0,i_1,\ldots,i_\alpha \rangle$. The independence polynomial of a graph has been of considerable interest \cite{INDPOLY, Arocha1984,Andrews2000,INDFIRST,Gutman1992,Hoede1994,Levit2002,VWCOVEREDLOGCONCAVE,Gutman1995} since it was first defined by Gutman and Harary in 1983 as a generalization of the matching polynomial. 

For many graph polynomials (such as matching \cite{HEILMANN}, chromatic \cite{read,chromhuh} and reliability \cite{colbook,relhuh} polynomials), the (absolute value of the) coefficient sequence, under a variety of bases expansions, have long been conjectured to be (or proven to be) {\it unimodal}, that is, nondecreasing then nonincreasing. We say that a polynomial is unimodal if its sequence of coefficients is unimodal.

What can we say about the unimodality of independence polynomials? They certainly form a sequence of positive integers. Alavi et al. \cite{ALAVI} showed, in general, that the independence sequence $\langle i_{k} \rangle$  of a graph $G$ can be far from unimodal, for example, the graph $K_{25}+4K_{2}$ has independence sequence $\langle 1,33,24,32,16\rangle$. More examples of graphs with nonunimodal independence sequences can be found in \cite{ALAVI}. 

However, there are classes of graphs for which the independence coefficients are indeed unimodal. In a beautiful paper \cite{Chudnovsky2007}, Chudnovsky and Seymour provedthat the coefficients of the independence polynomials of {\it claw--free} graphs (that is, those without an induced star on $4$ vertices) are unimodal. 

Another highly structured family of graphs with respect to independence are {\it well-covered} graphs, those whose maximal independent sets all have the same size (complete graphs and the $5$-cycle are examples). The structure of such graphs has attracted considerable attention in the literature, with characterizations for those of high girth \cite{nowawcgirth}. In \cite{brownwc}, the authors conjectured that the independence coefficients of well-covered graphs were unimodal, and showed that every graph $G$ can be embedded as an induced subgraph of such a well-covered graph. However, Michael and Traves \cite{TRAVES} later disproved the conjecture. A conjecture due to Alavi et al. \cite{ALAVI} that is still open is that the independence polynomial of a tree is unimodal.

Finally, Levit and Mandrescu \cite{levitgeneric} amended the original unimodality conjecture on well-covered graphs as follows. A {\it very well-covered} graph $G$ of order $n$ (that is, on $n$ vertices) is a well-covered graph for which every maximal independent set has size $n/2$; for example, the complete bipartite graphs $K_{m,m}$ are very well-covered. Other examples are afforded by the following construction. Let $G$ be any graph. Form $G^\ast$, the {\it leafy extension} of $G$ (sometimes also called the {\it corona} of $G$ with $K_{1}$)  from $G$ by attaching, for each vertex $v$ of $G$ a new vertex $v^\ast$ to $v$ with an edge (such a vertex is called a {\it pendant} vertex); leafy extensions are always very well-covered (more about that shortly). 

Levit and Mandrescu conjectured that the coefficients of the independence polynomials of a very well-covered graph are unimodal, and to date, the conjecture remains open. Some partial results have been proven on the tail of independence sequences of very well-covered graphs  \cite{LEVITBOOK} and the first $\lceil\tfrac{\alpha}{2}\rceil$ terms have been shown to be nondecreasing for well-covered graphs \cite{TRAVES}. The conjecture is known to hold when $\alpha(G) \leq 9$ \cite{LEVITBOOK} and for leafy extensions of any graph $G$ where $\alpha(G) \leq 8$ \cite{Chen}, or where $G$ is a path or star \cite{VWCOVEREDLOGCONCAVE}. In this paper we shall show that Levit and Mandrescu's conjecture holds for some iterated leafy extensions of {\it any} graph $G$.

\section{Unimodality of Independence Polynomials of Leafy Extensions and Sectors in the Complex Plane}

The leafy extension $G^{\ast}$ of any graph $G = (V,E)$ of order $n$ is always very-well-covered. Clearly,  $\alpha(G^{\ast}) \leq n$, as the graph has a perfect matching (and no independent set can contain two vertices that are matched). Moreover, $\alpha(G^{\ast}) =  n$ as any independent set $I$ of $G$ can be extended to one in $G^{\ast}$ by adding in any subset of $(V-I)^{\ast} = \{v^{\ast}: v \in V-I\}$. It follows (see also \cite{VWCOVEREDLOGCONCAVE}) that if 
$i(G,x) = \sum i_{k}x^{k}$, then
\begin{eqnarray} 
i(G^{\ast},x) & = & \sum i_{k} x^{k}(1+x)^{n-k} \nonumber \\
                   & = & (1+x)^n \cdot i \left( G,\frac{x}{1+x} \right) . \label{leafyformula}
\end{eqnarray}

For a graph $G$ and positive integer $k$, let $G^{k\ast}$ denote the {\it $k$--th iterated leafy extension} of $G$, that is, the graph formed by recursively attaching pendant vertices, $k$ times:
\[ G^{k\ast} = \left\{ \begin{array}{ll}
                               G^\ast & \mbox{if $k = 1$},\\
                               (G^{(k-1)\ast})^\ast & \mbox{if $k \geq 2$.}
                              \end{array}  
                     \right. 
\]                    
Figure~\ref{leafyexample} shows the graph $P_{4}^{2\ast}$.

\begin{figure}[htp]
\begin{center}
\includegraphics[scale=0.5]{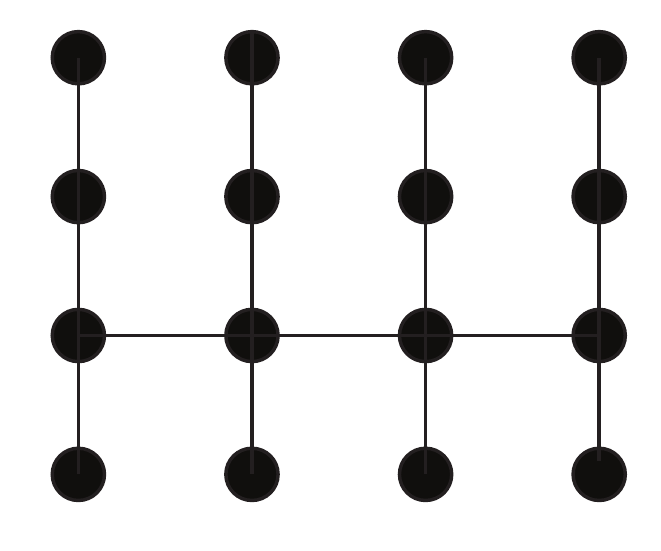}
\caption{The double leafy extension of $P_{4}$.}
\label{leafyexample}
\end{center}
\end{figure}

We can extending  formula (\ref{leafyformula}) to higher iterations of the $\ast$ operation as follows. 

\begin{prop}\label{iteratedleafyprop} For any graph $G$ of order $n$ and any positive integer $k$,
$$i(G^{k\ast},x)=i(G,\tfrac{x}{kx+1})(kx+1)^n\prod_{\ell=1}^{k-1}\left(\ell x+1\right)^{n2^{k-\ell-1}}.$$
\end{prop}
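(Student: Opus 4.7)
The plan is to proceed by induction on $k$. The base case $k=1$ is precisely equation (\ref{leafyformula}), since the indicated product is empty. For the inductive step, I would exploit the recursive definition $G^{k\ast}=(G^{(k-1)\ast})^{\ast}$ together with the fact that the $\ast$ operation doubles the vertex count, so $G^{(k-1)\ast}$ has order $n\cdot 2^{k-1}$. Applying (\ref{leafyformula}) to $G^{(k-1)\ast}$ yields
\[
i(G^{k\ast},x)=(1+x)^{n2^{k-1}}\cdot i\!\left(G^{(k-1)\ast},\tfrac{x}{1+x}\right),
\]
into which I would substitute the inductive hypothesis.

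The key computational step is the substitution $y=\tfrac{x}{1+x}$ inside the inductive formula. A short calculation shows $(k-1)y+1=\tfrac{kx+1}{1+x}$, which gives $\tfrac{y}{(k-1)y+1}=\tfrac{x}{kx+1}$ (delivering the correct argument for $i(G,\cdot)$), while the generic factor transforms as $\ell y+1=\tfrac{(\ell+1)x+1}{1+x}$. Reindexing $m=\ell+1$ converts the product $\prod_{\ell=1}^{k-2}(\ell y+1)^{n2^{k-\ell-2}}$ into a product over $m=2,\dots,k-1$ of $((mx+1)/(1+x))^{n2^{k-m-1}}$, which is one term short of the target product.

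The remaining step is bookkeeping the powers of $1+x$. The inductive formula contributes $(1+x)^{-n-\sum_{\ell=1}^{k-2}n2^{k-\ell-2}}=(1+x)^{-n2^{k-2}}$ in the denominators, while the outer prefactor provides $(1+x)^{n2^{k-1}}$. The net power is $(1+x)^{n2^{k-2}}$, and since $2^{k-2}=2^{k-m-1}$ at $m=1$, this factor fills in precisely the missing $m=1$ term of the product $\prod_{m=1}^{k-1}(mx+1)^{n2^{k-m-1}}$.

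I expect the main obstacle to be purely clerical: tracking both the exponent shifts in the geometric series $\sum 2^{k-\ell-2}$ and the reindexing of the product so that the $(1+x)^{n2^{k-2}}$ surplus correctly merges into the $\ell=1$ (equivalently $m=1$) factor. No genuine combinatorial insight beyond formula (\ref{leafyformula}) and the doubling of vertices under $\ast$ is needed.
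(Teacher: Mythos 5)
Your proposal is correct and follows essentially the same route as the paper: induction on $k$ using formula (\ref{leafyformula}) applied to the previous leafy extension, the substitution $y=\tfrac{x}{1+x}$, reindexing the product, and absorbing the net power of $(1+x)$ as the missing first factor. The only cosmetic difference is that you step from $k-1$ to $k$ while the paper steps from $k$ to $k+1$; your exponent bookkeeping ($n+\sum_{\ell=1}^{k-2}n2^{k-\ell-2}=n2^{k-2}$) matches the paper's computation.
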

\begin{proof}
We proceed by induction on $k$, the number of iterations of the $*$ operation.  The base case follows directly from (\ref{leafyformula}), so we can assume that the result holds for some $k\ge1$, i.e.,
$$i(G^{k\ast},x)=i(G,\tfrac{x}{kx+1})(kx+1)^n\prod_{\ell=1}^{k-1}\left(\ell x+1\right)^{n2^{k-\ell-1}}.$$
A trivial induction shows that $G^{k\ast}$ has order $n2^{k}$.
From these, the fact that $G^{(k+1)\ast} = (G^{k\ast})^\ast$ and formula (\ref{leafyformula}) we derive that
\begin{eqnarray*}
i(G^{(k+1)*},x)&=&(1+x)^{n2^{k}}i(G^{k*},\tfrac{x}{x+1})\\
&=&\left(1+x\right)^{n2^{k}}i\left(G,\tfrac{\tfrac{x}{x+1}}{\tfrac{kx}{x+1}+1}\right) \left(k\left(\tfrac{x}{x+1}\right)+1\right)^n\prod_{\ell=1}^{k-1}\left(\ell \left(\tfrac{x}{x+1}\right)+1\right)^{n 2^{k-\ell-1}}\\
&=&\left(1+x\right)^{n2^{k}}i\left(G,\tfrac{x}{(k+1)x+1}\right)\left(\tfrac{(k+1)x+1}{x+1}\right)^n\prod_{\ell=1}^{k-1}\left(\tfrac{(\ell+1)x+1}{x+1}\right)^{n2^{k-\ell-1}}\\
&=&\frac{\left(1+x\right)^{n2^{k}}}{\left(1+x\right)^{n2^{k-1}}}i\left(G,\tfrac{x}{(k+1)x+1}\right)\left((k+1)x+1\right)^n\prod_{\ell=1}^{k-1}\left((\ell+1)x+1\right)^{n2^{k-\ell-1}}\\
&=&(1+x)^{n2^{k-1}} i\left(G,\tfrac{x}{(k+1)x+1}\right)\left((k+1)x+1\right)^n\prod_{\ell=1}^{k-1}\left((\ell+1)x+1\right)^{n2^{k-\ell-1}}\\
&=&i\left(G,\tfrac{x}{(k+1)x+1}\right)\left((k+1)x+1\right)^n\prod_{\ell=0}^{k-1}\left((\ell+1)x+1\right)^{n2^{k-\ell-1}}\\
&=&i\left(G,\tfrac{x}{(k+1)x+1}\right)\left((k+1)x+1\right)^n\prod_{\ell=1}^{k}\left(\ell x+1\right)^{n2^{(k+1)-\ell-1}}.\\
\end{eqnarray*}    
\end{proof}

There are many techniques for proving that a sequence is unimodal (see for example, \cite{STANELYSURVEY} and \cite{BRENTISURVEY}). One that has been frequently applied is due to Newton (c.f. \cite[pp. 270-271]{COMTET}), who proved that if a polynomial $p(x) = a_{0} + a_{1}x + \cdots + a_{n}x^{n}$ with positive coefficients has all real roots, then the sequence $\langle a_{0},a_{1},\ldots,a_{n} \rangle$ 
satisfies 
\[ a_{i}^{2} \geq \frac{i+1}{i}\frac{n-i+1}{n-i} a_{i-1}a_{i+1},\]
and hence is {\it log concave}, that is $a_{i}^{2} \geq a_{i-1}a_{i+1}$ for all $0 < i < n$ (in fact, the sequence is {\it strictly} log concave as $a_{i}^{2} > \frac{i+1}{i}\frac{n-i+1}{n-i} a_{i-1}a_{i+1}$ holds for all relevant $i$). In such a case, it follows directly that the sequence is unimodal as well. Newton's simple but elegant theorem has been used to prove that a variety of sequences (and polynomials) are unimodal, such as matching polynomials \cite{HEILMANN} and the independence polynomials of claw-free graphs \cite{Chudnovsky2007}. 

From Proposition~\ref{iteratedleafyprop} we see at once that all independence roots of $G$ are real if and only if the same is true of its leafy extension. As most independence polynomials have a non-real root \cite{BROWNAVERAGE}, we won't be able to get at our desired result, namely that for {\it any} graph $G$ the independence polynomial of some iterated leafy extension of $G$ is unimodal, via Newton's theorem. 

However, Newton's theorem is only a sufficient condition for the coefficient sequence to be log concave. Brenti et al. \cite{SLCREGION} weakened the conditions as follows:

\begin{prop}[\cite{SLCREGION}]
\label{propSLC}
If all the roots $z$ of the polynomial $f(x)\in\real[x]$ are in the region 
\[ \{z\in\Complex:\vert\text{arg}(z)\vert<\tfrac{\pi}{3}\},\] then the sequence of coefficients of $f(x)$ is strictly log concave and alternates in sign.
\end{prop}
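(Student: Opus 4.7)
The plan is to factor $f(x)$ over $\real$ into linear and real-irreducible quadratic factors, check that each such factor by itself has the desired coefficient properties, and then show that those properties are preserved under polynomial multiplication.

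I would begin by writing $f(x)=a_n\prod_i(x-r_i)\prod_j(x^2-2\alpha_j x+\rho_j^2)$, where the $r_i$ are the real roots of $f$ and each $\alpha_j\pm i\beta_j$ is a complex-conjugate pair of non-real roots with $\rho_j^2=\alpha_j^2+\beta_j^2$. The hypothesis $|\arg z|<\pi/3$ forces every real root $r_i$ to be strictly positive (a negative real root would have argument $\pi$), and gives $\cos(\arg z)>\cos(\pi/3)=1/2$ for every non-real root, hence $\alpha_j>\rho_j/2$. Passing to $g(y):=f(-y)$, alternation of sign of the coefficients of $f$ becomes the statement that all coefficients of $g$ share a common sign, and strict log-concavity is invariant under this substitution (it depends only on $|a_i|^2>|a_{i-1}||a_{i+1}|$). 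Under $x\mapsto -y$, each linear factor becomes (up to sign) $y+r_i$ with $r_i>0$, and each quadratic becomes $y^2+B_jy+D_j$ with $B_j=2\alpha_j>0$, $D_j=\rho_j^2>0$, and $B_j^2>D_j$ (from $4\alpha_j^2>\rho_j^2$). The linear sequence $(r_i,1)$ is trivially strictly log-concave, and the quadratic sequence $(D_j,B_j,1)$ is strictly log-concave exactly because $B_j^2>D_j$.

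The heart of the argument is then the following closure lemma: if $P(y)=\sum_k p_k y^k$ has positive, strictly log-concave coefficients and $F(y)$ is either $y+r$ (with $r>0$) or $y^2+By+D$ (with $B,D>0$ and $B^2>D$), then the coefficient sequence $(c_k)$ of $P(y)F(y)$ is again positive and strictly log-concave. I would prove this by direct expansion of $c_k^2-c_{k-1}c_{k+1}$ in powers of $r$ (linear case) or of $B$ and $D$ (quadratic case). Every term appearing in the expansion is either a strict log-concavity gap $p_m^2-p_{m-1}p_{m+1}>0$ of $P$ itself, or a $2\times 2$ minor $p_ip_j-p_{i-1}p_{j+1}$ with $j\ge i$, which is nonnegative because the ratio $p_m/p_{m-1}$ is nonincreasing for a positive log-concave sequence. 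Iterating this lemma over the factors of $g$ and substituting back $y=-x$ then yields the theorem.

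The main obstacle is the quadratic case of the closure lemma. In that expansion, the coefficient of $D$ equals $2p_kp_{k-2}-p_{k-1}^2-p_{k-3}p_{k+1}=-(p_{k-1}^2-p_kp_{k-2})+(p_kp_{k-2}-p_{k-3}p_{k+1})$, whose first summand is strictly negative. I would control this by pairing the $D$-contribution with the $B^2$-coefficient $p_{k-1}^2-p_{k-2}p_k>0$: their sum rearranges to $(B^2-D)(p_{k-1}^2-p_{k-2}p_k)+D(p_kp_{k-2}-p_{k-3}p_{k+1})$, both of whose summands are nonnegative, with the first strictly positive because $B^2>D$. Every other term in the expansion is either a strict log-concavity gap or a $2\times 2$ minor of $P$, hence nonnegative, and the $D^2$-coefficient $p_k^2-p_{k-1}p_{k+1}$ is strictly positive, so the whole expression is strictly positive.
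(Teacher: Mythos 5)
Your proposal is correct. Note, though, that the paper offers no proof of this proposition at all: it is quoted verbatim from Brenti et al.\ \cite{SLCREGION} and used as a black box, so there is no in-paper argument to compare against. What you have written is a legitimate self-contained proof along the standard lines for results of this type: factor $f$ over $\real$ into linear and irreducible quadratic factors, observe that the sector hypothesis $\vert\arg z\vert<\pi/3$ translates (after $x\mapsto -y$) into factors $y+r$ with $r>0$ and $y^2+By+D$ with $B,D>0$ and $B^2>D$, and then prove a closure lemma saying that multiplication by such a factor preserves positivity and strict log-concavity. Your key identity, rewriting the $B^2$- and $D$-contributions as $(B^2-D)(p_{k-1}^2-p_{k-2}p_k)+D(p_{k-2}p_k-p_{k-3}p_{k+1})$, correctly neutralizes the one negative term in the expansion, and the remaining terms are indeed log-concavity gaps or $2\times 2$ minors $p_ip_j-p_{i-1}p_{j+1}$ ($j\ge i$), which are nonnegative for a positive log-concave sequence. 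One small bookkeeping point: with the convention $p_m=0$ outside the index range, the $D^2$-coefficient $p_k^2-p_{k-1}p_{k+1}$ vanishes at the top index $k=\deg P+1$, so strictness there should be attributed to the term $(B^2-D)(p_{k-1}^2-p_{k-2}p_k)$, which is strictly positive for every relevant $k$ (including the boundary indices, where it reduces to $(B^2-D)p_0^2$ or $(B^2-D)p_n^2$); this is a one-line fix, not a gap. With that reading, iterating the lemma over the factors of $f(-y)$ and undoing the substitution gives both the strict log-concavity and the strict sign alternation, exactly as claimed.
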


Replacing $f(x)$ by $f(-x)$, we derive that:
 
\begin{cor}\label{corSLC}
If all the roots $z$ of the polynomial $f(x)\in\real[x]$ are in the region 
\[ \{z\in\Complex:\tfrac{2\pi}{3}<\vert\text{arg}(z)\vert<\tfrac{4\pi}{3}\},\] 
then the sequence of coefficients of $f(x)$ is strictly log concave (and the sequence of coefficients of $f(x)$ is either all positive or all negative).
\end{cor}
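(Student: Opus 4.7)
The plan is to deduce Corollary \ref{corSLC} as an immediate application of Proposition \ref{propSLC} via the substitution $x\mapsto -x$. I would set $g(x)=f(-x)$, which still lies in $\real[x]$, and note that its roots are precisely the negatives of the roots of $f$. Since multiplication by $-1$ shifts the argument of a nonzero complex number by $\pi$ (mod $2\pi$), the hypothesis that every root $w$ of $f$ satisfies $2\pi/3<|\arg(w)|<4\pi/3$ is equivalent to every root $z$ of $g$ satisfying $|\arg(z)|<\pi/3$, which is the hypothesis of Proposition \ref{propSLC} for $g$.

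Next I would invoke Proposition \ref{propSLC} applied to $g$, which yields that the coefficient sequence of $g$ is strictly log concave and alternates in sign. Writing $f(x)=\sum_{k=0}^{n}a_{k}x^{k}$ so that $g(x)=\sum_{k=0}^{n}(-1)^{k}a_{k}x^{k}$, the sign-alternation of the sequence $((-1)^{k}a_{k})$ translates precisely into all of the $a_{k}$ having a common sign, giving the parenthetical conclusion. Meanwhile, strict log concavity of $g$'s coefficients reads $((-1)^{k}a_{k})^{2}>((-1)^{k-1}a_{k-1})((-1)^{k+1}a_{k+1})$, and the factors of $\pm 1$ cancel to leave $a_{k}^{2}>a_{k-1}a_{k+1}$ for all $0<k<n$, which is the strict log concavity of the coefficient sequence of $f$.

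There is no real obstacle here; the whole argument is a one-line substitution followed by routine bookkeeping with signs, which is exactly why the authors introduce the corollary with the phrase ``Replacing $f(x)$ by $f(-x)$.'' The only point worth mild care is the convention for $\arg$, but writing $|\arg(z)|$ in the hypothesis describes the same sector around the negative real axis under either the $(-\pi,\pi]$ or $[0,2\pi)$ convention, provided one is consistent.
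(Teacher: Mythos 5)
Your proposal is correct and is exactly the paper's argument: the corollary is obtained from Proposition \ref{propSLC} by the substitution $x\mapsto -x$, with the sign bookkeeping you describe. Nothing further is needed.
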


We shall make good use out of this corollary now to prove our main result on log concavity of independence polynomials of leafy extensions, via an excursion through their roots.

\begin{theorem}\label{thmexistsk}
For all graphs $G$ let 
\[ M = \max \left\{ \frac{\frac{1}{\sqrt{3}} |\operatorname{Im} z | + | \operatorname{Re} z|}{|z|^2} : z \mbox{ is a root of } I(G,x)\right\} . \] 
If $k>M$, then the coefficients of $I(G^{k\ast},x)$ are strictly log concave.  
\end{theorem}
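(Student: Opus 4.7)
The plan is to apply Corollary~\ref{corSLC} by showing that every root $w$ of $i(G^{k\ast},x)$ lies in the sector $\tfrac{2\pi}{3}<|\arg(w)|\le\pi$, or equivalently (writing $w=p+qi$) that $p+\tfrac{1}{\sqrt{3}}|q|<0$.

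First I would read off the roots of $i(G^{k\ast},x)$ directly from Proposition~\ref{iteratedleafyprop}. The factors $(kx+1)^n$ and $\prod_{\ell=1}^{k-1}(\ell x+1)^{n 2^{k-\ell-1}}$ contribute only the negative real roots $-1/\ell$ for $\ell=1,\dots,k$, which trivially have argument $\pi$. The remaining roots come from $i(G,\tfrac{x}{kx+1})$: after clearing denominators, each root $z$ of $i(G,x)$ produces a root $w_z=\tfrac{z}{1-kz}$ of $i(G^{k\ast},x)$, and this M\"obius image is well-defined since the positive coefficients of $i(G,x)$ force any real root to be negative, so $1-kz\neq 0$.

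The central step is a short M\"obius calculation. Writing $z=a+bi$ and rationalizing,
\[ w_z = \frac{(a-k|z|^2)+bi}{|1-kz|^2}, \]
so
\[ \operatorname{Re}(w_z)+\tfrac{1}{\sqrt{3}}|\operatorname{Im}(w_z)| = \frac{a-k|z|^2+\tfrac{1}{\sqrt{3}}|b|}{|1-kz|^2}. \]
This is negative exactly when $k|z|^2>\operatorname{Re}(z)+\tfrac{1}{\sqrt{3}}|\operatorname{Im}(z)|$, and since $\operatorname{Re}(z)\le|\operatorname{Re}(z)|$, the hypothesis $k>M$ implies this inequality uniformly across all roots $z$ of $i(G,x)$. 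Corollary~\ref{corSLC} then yields strict log concavity of the coefficient sequence of $i(G^{k\ast},x)$.

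The only subtle point, rather than a true obstacle, is lining up the geometric sector condition of Corollary~\ref{corSLC} with the algebraic quantity defining $M$: the use of $|\operatorname{Re}(z)|$ (rather than $\operatorname{Re}(z)$) in the definition of $M$ is precisely what lets a single threshold $k$ push every root of $i(G,x)$ into the required sector under $z\mapsto z/(1-kz)$, regardless of which quadrant it lies in. Everything else is bookkeeping of the factorization in Proposition~\ref{iteratedleafyprop}.
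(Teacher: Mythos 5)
Your proposal is correct and follows essentially the same route as the paper: read off the roots of $i(G^{k\ast},x)$ from Proposition~\ref{iteratedleafyprop}, rationalize the M\"obius image $\tfrac{z}{1-kz}$ of each independence root, show that $k>M$ pushes it into the sector of Corollary~\ref{corSLC}, and conclude strict log concavity. Your single linear criterion $\operatorname{Re}(w)+\tfrac{1}{\sqrt{3}}\vert\operatorname{Im}(w)\vert<0$ is just a slightly tidier packaging of the paper's two-step check (negative real part plus the bound $\vert\operatorname{Im} w/\operatorname{Re} w\vert<\sqrt{3}$), not a different argument.
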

\begin{proof}
From Proposition~\ref{iteratedleafyprop} it follows that if $r_1,\ldots,r_m$ are the roots of $I(G,x)$, then the roots of $I(G^{k*},x)$ are $\tfrac{r_i}{1-kr_i}$ for $i=1,2,\ldots,m$ along with the rational numbers $\tfrac{-1}{\ell}$ for $\ell=1,2,\ldots,k$.
Let $r$ be any root of $I(G,x)$, and set $a = \operatorname{Re} r$ and $b = \operatorname{Im} r$. Note that either $a$ or $b$ is nonzero since $0$ is not the root of any independence polynomial and likewise, $r \neq 1/k$ for all $k\ge 0$. We expand a root of $I(G^{k*},x)$ to obtain,
\begin{eqnarray}
\frac{r}{1-kr}&=& \frac{a+ib}{1-k(a+ib)}\nonumber \\
&=& \frac{a+ib}{(1-ka)-ikb}\cdot\frac{(1-ka)+ikb}{(1-ka)+ikb}\nonumber \\
&=& \frac{a(1-ka)+iakb+ib(1-ka)-kb^2}{(1-ka)^2+k^2b^2}\nonumber \\
&=& \frac{a(1-ka)-kb^2+i(akb+b(1-ka))}{(1-ka)^2+k^2b^2}\nonumber \\
&=& \frac{(a-ka^2-kb^2)+ib}{(1-ka)^2+k^2b^2}.
\end{eqnarray}

We now wish to show that for sufficiently large $k$, the root $z = \frac{r}{1-kr}$ of $I(G^{k\ast},x)$ lies in the sector $\{ z\in\mathbb{C}:\tfrac{2\pi}{3}<|\text{arg}(z)|< \tfrac{4\pi}{3}\}$; the result will then follow immediately from Corollary \ref{corSLC} (as the negative rational roots obviously lie in the sector).  It is clear to see that $z$ lies in the sector if and only if $\operatorname{Re} z<0$ and $\vert \tfrac{\operatorname{Im} z}{\operatorname{Re} z}\vert < \sqrt{3}$. Now, $\operatorname{Re} z = \tfrac{a-ka^2-kb^2}{(1-ka)^2+k^2b^2}$ and $(1-ka)^2+k^2b^2>0$ since  if $b = 0$ then $a \neq 1/k$. We also have $a-ka^2-kb^2=-k(a^2+b^2)+a$ and so for $k >\tfrac{(1/\sqrt{3})\vert b\vert+|a|}{a^2+b^2}\ge\tfrac{|a|}{a^2+b^2}$, it follows that $\operatorname{Re} z<0$. We note as well that for $k>\tfrac{|a|}{a^2+b^2}$, $k(a^2+b^2)-a)$ is positive and increasing, as a function of $k$, and that $\tfrac{(1/\sqrt{3})\vert b\vert+|a|}{a^2+b^2}\ge\tfrac{|a|}{a^2+b^2}$.  We now compute the ratio of the imaginary and real part of $z$:
\begin{eqnarray*}
\left\vert\dfrac{\operatorname{Im} z}{\operatorname{Re} z}\right\vert&=& \left\vert\frac{b}{k(a^2+b^2)-a}\right\vert\\
&< & \dfrac{\vert b\vert}{\left\vert \left(\tfrac{(1/\sqrt{3})\vert b\vert+|a|}{a^2+b^2}\right)(a^2+b^2)-a\right\vert}\\
&=& \dfrac{\vert b\vert}{(1/\sqrt{3})\vert b\vert+|a|-a}\\
&\leq& \sqrt{3}.\\
\end{eqnarray*}

The result now follows from Corollary \ref{corSLC}.

\end{proof}
\begin{cor}
Every graph $G$ on $n$ vertices is an induced subgraph of a very well-covered graph $H$ such that the sequence of coefficients of $I(H,x)$ is unimodal.
\end{cor}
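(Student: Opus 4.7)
The plan is to deduce this immediately from Theorem~\ref{thmexistsk} by taking $H = G^{k\ast}$ for $k$ large enough, and then checking the three required properties: (i) $G$ is an induced subgraph of $H$, (ii) $H$ is very well-covered, and (iii) the independence polynomial of $H$ is unimodal.

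First, I would fix any graph $G$. Since $I(G,x)$ is a polynomial with only finitely many roots, the quantity
\[ M = \max \left\{ \frac{\frac{1}{\sqrt{3}} |\operatorname{Im} z | + | \operatorname{Re} z|}{|z|^2} : z \mbox{ is a root of } I(G,x)\right\} \]
is a finite real number. Choose any integer $k > M$ and set $H = G^{k\ast}$.

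For property (i), I would argue by a short induction on $k$ that $G$ sits as an induced subgraph of $G^{k\ast}$: the construction of the leafy extension only attaches new pendant vertices to existing vertices, so no new edges are introduced among the vertices of $G$. Iterating gives $G \hookrightarrow G^{\ast} \hookrightarrow G^{2\ast} \hookrightarrow \cdots \hookrightarrow G^{k\ast}$ as induced subgraphs. For property (ii), I would simply observe that $G^{k\ast} = (G^{(k-1)\ast})^{\ast}$ is the leafy extension of some graph, and leafy extensions are very well-covered (as noted at the start of Section~2).

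Finally, for (iii), Theorem~\ref{thmexistsk} gives that the coefficients of $I(H,x) = I(G^{k\ast},x)$ are strictly log concave. Because the coefficients of an independence polynomial are nonnegative integers and the constant and leading terms are nonzero, strict log concavity (which in particular rules out internal zeros, since $a_i^2 > a_{i-1}a_{i+1}$ forces $a_i \ne 0$ whenever $a_{i-1}$ or $a_{i+1}$ is nonzero) implies unimodality by a standard argument: log concavity gives $a_i/a_{i-1} \geq a_{i+1}/a_i$, so the ratio of consecutive terms is nonincreasing, and once it drops below $1$ the sequence decreases from then on. This yields the desired unimodal sequence.

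There is essentially no obstacle here; the entire technical content was absorbed into Theorem~\ref{thmexistsk}. The only points to be careful about are the brief verifications that $G$ embeds as an induced subgraph of $G^{k\ast}$ (which is immediate from the construction) and that log concavity of a positive integer sequence with no internal zeros implies unimodality (which is standard).
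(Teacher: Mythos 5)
Your proof is correct and follows exactly the route the paper intends: take $H = G^{k\ast}$ with $k > M$, observe that $G$ embeds as an induced subgraph, that leafy extensions are very well-covered, and that the strict log concavity from Theorem~\ref{thmexistsk} (together with positivity of the independence coefficients) gives unimodality. The paper leaves these verifications implicit, so your write-up is simply a slightly more detailed version of the same argument.
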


\section{Concluding Remarks}

While Theorem~\ref{thmexistsk} shows that for any graph $G$, the independence polynomial of some iterated leafy extension of $G$ is unimodal, the question remains as to whether this is true for {\it every} iterated leafy extension, and, in particular, for the leafy extension of $G$.  From the properties of linear fractional transformations, we can explicitly state where the independence roots of $G$ need to lie to ensure its leafy extension has a log concave (and hence unimodal)  independence polynomial.

\begin{theorem}\label{thmrootsregion}
If the roots of $I(G,x)$ lie outside of the region bounded by the union of circles with with radii $\tfrac{\sqrt{3}}{3}$ centred at $\tfrac{1}{2}+\tfrac{\sqrt{3}i}{6}$ and $\tfrac{1}{2}-\tfrac{\sqrt{3}i}{6}$, then the $i(G^*,x)$ is strictly log-concave.
\end{theorem}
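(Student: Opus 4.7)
The plan is to apply Corollary~\ref{corSLC} to $i(G^\ast,x)$. By formula~(\ref{leafyformula}), the roots of $i(G^\ast,x)$ are $-1$ (with multiplicity $n$) together with $T(r):=r/(1-r)$ for each root $r$ of $i(G,x)$. Since $-1$ has argument $\pi$, it already lies in the sector $S=\{z\in\mathbb{C}:\tfrac{2\pi}{3}<|\arg z|<\tfrac{4\pi}{3}\}$, so by Corollary~\ref{corSLC} it suffices to show that $T(r)\in S$ for every root $r$ of $i(G,x)$. The entire task is therefore to describe the preimage $T^{-1}(S)$ explicitly in the complex plane.

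The Möbius transformation $T$ has inverse $T^{-1}(w)=w/(1+w)$, sending $0,-1,\infty$ to $0,\infty,1$ respectively. The boundary of $S$ consists of the two rays from the origin with arguments $\pm 2\pi/3$, each lying on a line through $0$ but not through $-1$. Since Möbius maps send lines and circles to lines and circles, each of these lines must map to a \emph{circle} through $T^{-1}(0)=0$ and $T^{-1}(\infty)=1$. Pinning the circle down takes only one further point: a direct computation gives $T^{-1}(e^{i\,2\pi/3})=e^{i\pi/3}$, and the unique circle through $0$, $1$, and $e^{i\pi/3}$ is easily found to have centre $\tfrac12+\tfrac{\sqrt{3}}{6}i$ and radius $\tfrac{\sqrt{3}}{3}$; by complex conjugation, the lower ray maps to the reflected circle, centred at $\tfrac12-\tfrac{\sqrt{3}}{6}i$ with the same radius. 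These two circles intersect at $0$ and $1$ and partition the Riemann sphere into four regions, and since $T^{-1}(-1)=\infty$ with $-1\in S$, the image $T^{-1}(S)$ must be the unique region containing $\infty$, namely the exterior of both closed disks (a quick check with $z=-2\in S$, whose preimage is $2$, confirms this). Hence $T(r)\in S$ precisely when $r$ lies outside the region bounded by the union of the two circles, which is exactly the hypothesis of the theorem; Corollary~\ref{corSLC} then delivers strict log-concavity.

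The main obstacle is the Möbius bookkeeping: verifying that the image of each boundary ray is exactly the stated circle (via the three-point calculation, or alternatively by using conformality of $T^{-1}$ at $0$), and, more subtly, correctly identifying which of the four resulting regions of the sphere is $T^{-1}(S)$. Once this geometric picture is in place, the conclusion follows at once.
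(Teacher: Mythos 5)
Your proposal is correct and follows essentially the same route as the paper: both identify the image of the sector of Corollary~\ref{corSLC} under the M\"obius map $w\mapsto w/(1+w)$ by determining the two circles through three points each and then pinning down which complementary region is the image (the paper via the test point $1/2\mapsto 1/3$, you via tracking $-1\mapsto\infty$), after which the corollary applies. The only (immaterial) slip is that the multiplicity of the root $-1$ of $i(G^\ast,x)$ is $n-\alpha(G)$ rather than $n$; since all that matters is that $-1$ lies in the sector, the argument is unaffected.
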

\begin{proof}
We will find the image of the region $R=\{z\in\Complex:\tfrac{2\pi}{3}<\vert\text{arg}(z)\vert<\tfrac{4\pi}{3}\}$ under the M{\"o}bius transformation $f(z)=\tfrac{z}{1+z}$. Such a transformation sends lines and circles to lines and circles, and interiors/exteriors of circles and half-planes are sent to the same set of regions. We need only find the image of three points on the two line segments bounding the sector. The images of $-1+\sqrt{3}i$, $0$, and $\infty$ are $1+\tfrac{\sqrt{3}i}{3}$, $0$, and $1$ respectively, yielding the circle $C_1$, centred at $\tfrac{1}{2}+\tfrac{\sqrt{3}i}{6}$ with radius $\tfrac{\sqrt{3}}{3}$. As $1/2$ is inside $C_1$ and gets mapped to $\tfrac{1}{3}$, which is above the line $\text{arg}(z)=\tfrac{2\pi}{3}$, the {\it exterior} of $C_1$ gets mapped below the line $\text{arg}(z)=\tfrac{2\pi}{3}$.

Similarly, the images of $-1-\sqrt{3}i$, $0$, and $-\infty$ are $1-\tfrac{\sqrt{3}i}{3}$, $0$, and $1$ respectively, yielding the circle $C_2$, centred at $\tfrac{1}{2}-\tfrac{\sqrt{3}i}{6}$ with radius $\tfrac{\sqrt{3}}{3}$. As $1/2$ is inside $C_2$ and is mapped to $\tfrac{1}{3}$ which is below the line $\text{arg}(z)=\tfrac{4\pi}{3}$, the exterior of $C_2$ gets mapped below the line $\text{arg}(z)=\tfrac{4\pi}{3}$. Therefore, if we take an exterior point to the union of $C_1$ and $C_2$ is must have image under $f$ above the line $\text{arg}(z)=\tfrac{4\pi}{3}$ and below the line $\text{arg}(z)=\tfrac{2\pi}{3}$, i.e., in the region $R$. Therefore, by Corollary \ref{corSLC}, $I(G^*,x)$ is strictly log-concave.
\end{proof}

\begin{figure}[htb]
\begin{center}
\includegraphics[scale=0.4]{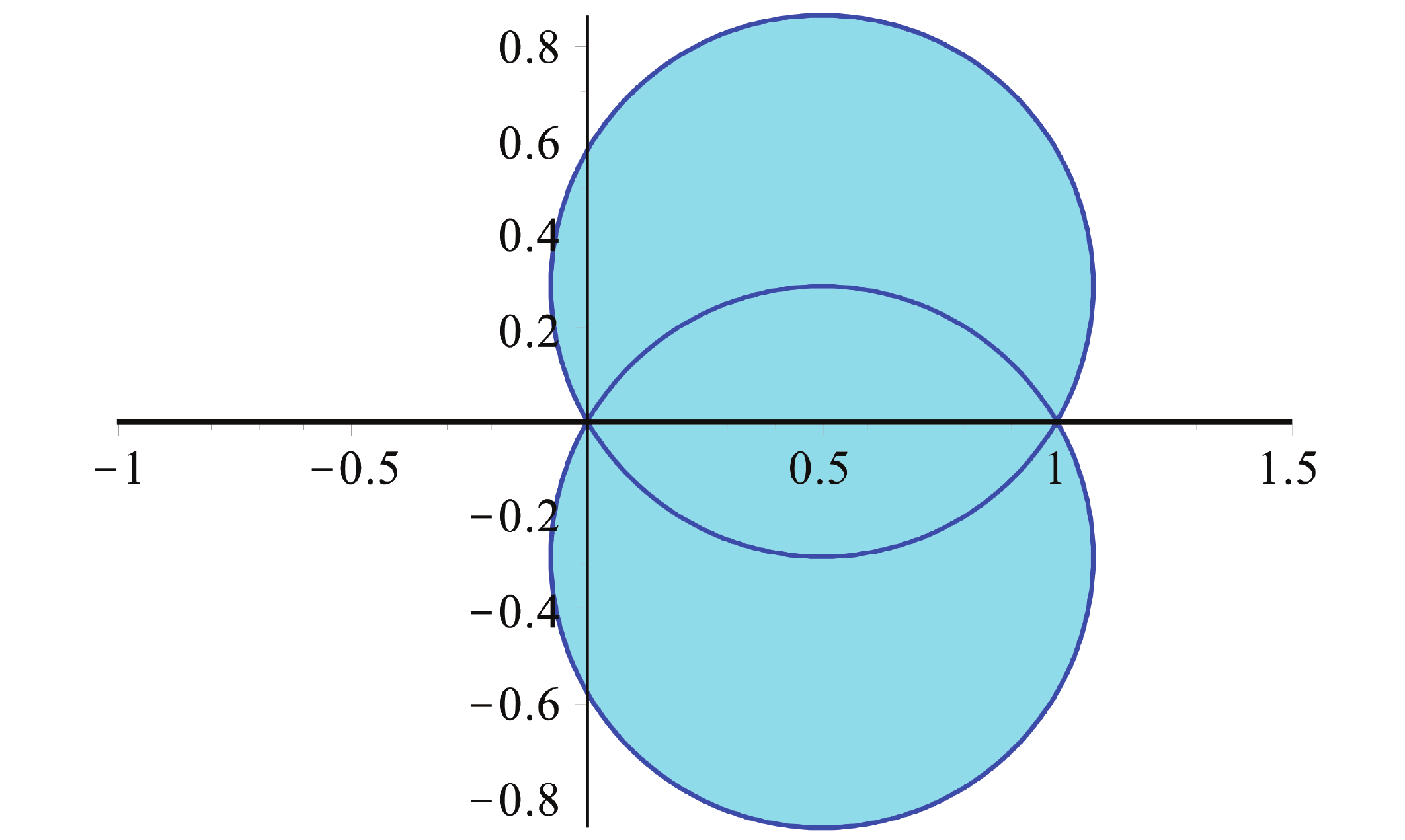}
\caption{Region that ends up outside the sector $R=\{z\in\Complex:\tfrac{2\pi}{3}\le\vert\text{arg}(z)\vert<\tfrac{4\pi}{3}\}$ under the M{\"o}bius transformation $f(z)=\tfrac{z}{1+z}$.}
\label{Region}
\end{center}
\end{figure}

Theorem \ref{thmrootsregion} assures us that as long as the roots of $I(G,x)$ are outside of a region in $\Complex$ with area a little more than $2$, then $I(G^\ast,x)$ will be strictly log-concave. Although this result works for many graphs, Brown, Hickman, and Nowakowski \cite{INDROOTS} showed that the set of independence roots of all graphs is dense in $\Complex$ even when restricted to well-covered graphs. Therefore, there exist graphs with independence roots in the union of the interior of the two circles specified in the statement of Theorem \ref{thmrootsregion} and therefore graphs that have leafy extensions with independence roots outside of $R$. In fact, using the methods outlined in \cite{INDROOTS} to find independence roots throughout $\Complex$, we have found that $I(G^\ast,x)$ has a root outside of $R$ for $G=P_5[\overline{K_{6}}]$ and $G=P_6[\overline{K_{11}}]$ (here $G[H]$ is the graph formed from $G$ by substituting a copy of $H$ in for each vertex of $G$ -- this is sometimes known as the lexicographic product of $G$ with $H$), although the independence polynomials in these cases turn out to be log-concave as well. 

Another point to note is that there exist graphs that are very well-covered but are not the leafy extension of another graph: some examples which were already pointed out are the bipartite graphs $K_{n,n}$ among others. Our results do not encompass these results; however, Finbow et al. \cite{nowawcgirth} showed that, with the exceptions of $K_1$ and $C_7$, a graph $G$ with $\operatorname{girth}(G)\ge 6$ is well-covered if and only if its pendant edges form a perfect matching. It is easy to see that the pendant edges of $G$ forming a perfect matching is equivalent to $G=H^*$ for some graph $H$ and therefore, our results apply to every well-covered graph with girth at least $6$.

\begin{figure}[htb]
\begin{center}
\includegraphics[scale=0.4]{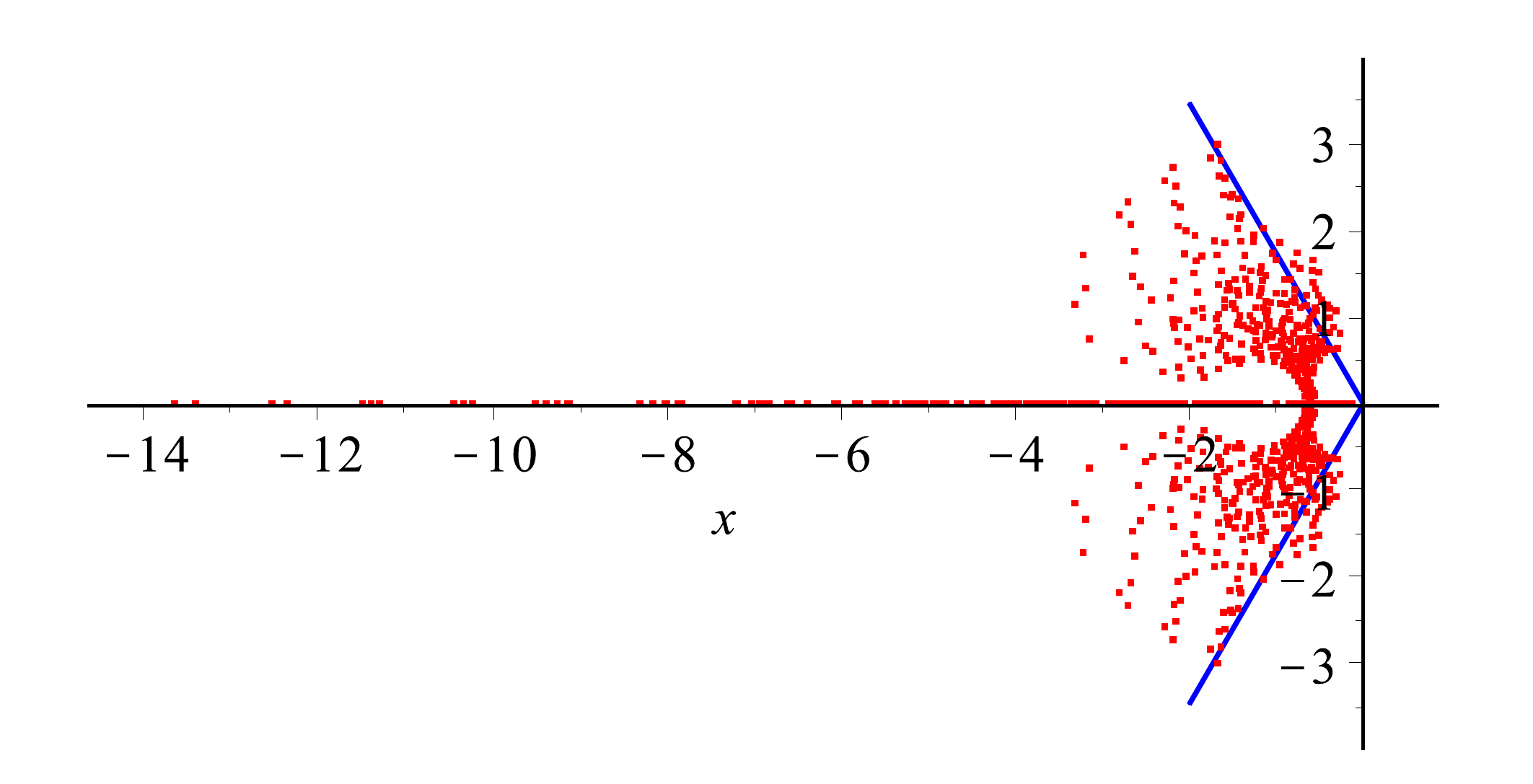}
\caption{Independence roots of all connected graphs of order $8$.}
\label{order8}
\end{center}
\end{figure}

\begin{figure}[htb]
\begin{center}
\includegraphics[scale=0.4]{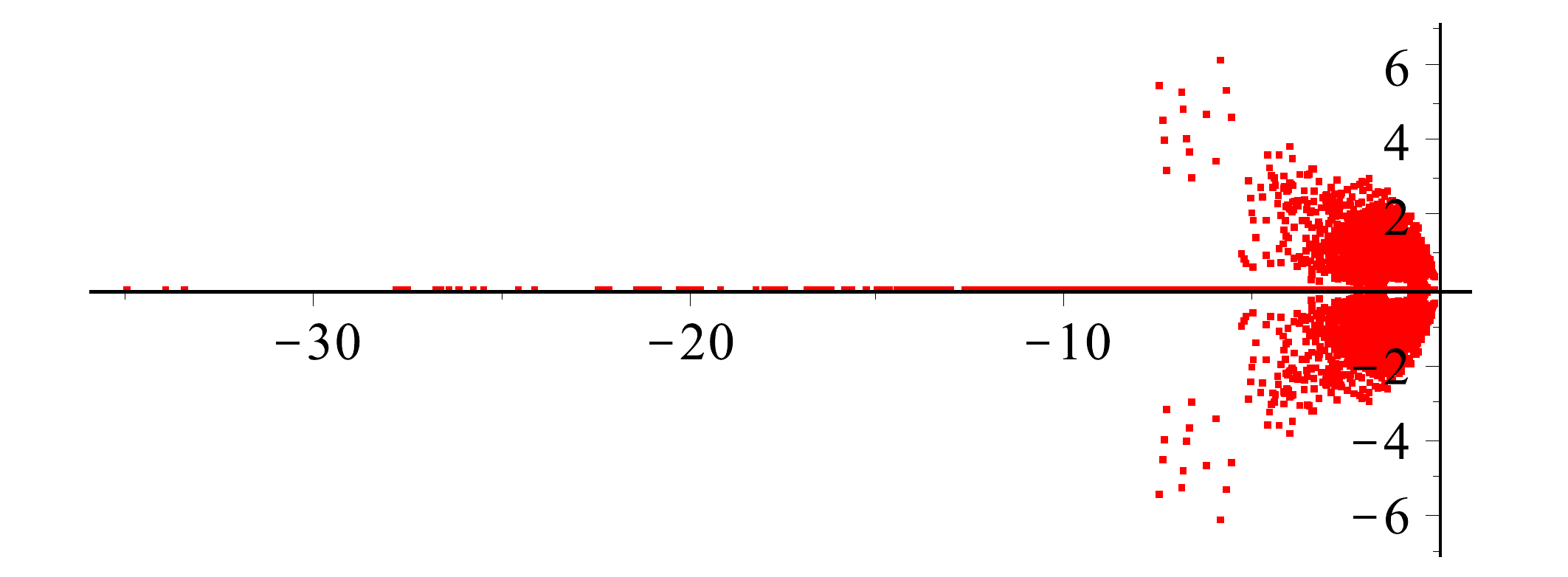}
\caption{Independence roots of all trees of order $14$.}
\label{trees14}
\end{center}
\end{figure}

Finally, the distribution of independence roots with respect to the sector  
\[ \{ z\in\mathbb{C}:\tfrac{2\pi}{3}<|\text{arg}(z)|< \tfrac{4\pi}{3}\}\]
is a fascinating one. Some computations suggest that most small graphs have their roots in the sector; in fact, out of $11,117$ connected graphs of order $8$, there are only $40$ independence roots (counting multiplicities) outside the sector (see Figure~\ref{order8}). Hence we propose the following:

\begin{conj}
The independence polynomial of almost every graph $G$ of order $n$ has all of its roots in the sector $\{ z\in\mathbb{C}:\tfrac{2\pi}{3}<|\text{arg}(z)|< \tfrac{4\pi}{3}\}$, and hence is log concave. 
\end{conj}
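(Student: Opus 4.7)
The plan is to work in the uniform random labeled graph model $G \sim G(n,1/2)$, the standard interpretation of ``almost every graph of order $n$.'' The first step is to establish sharp concentration of each coefficient $i_k(G)$ around its expectation $\mathbb{E}[i_k] = \binom{n}{k}\,2^{-\binom{k}{2}}$, uniformly for $0 \le k \le \alpha(G)$. Since $\alpha(G)$ is itself concentrated around $2\log_2 n$, only $O(\log n)$ coefficients are active, and each can be controlled via a vertex-exposure martingale together with Azuma's inequality, supplemented by Talagrand's inequality for the larger values of $k$ where $\mathbb{E}[i_k]$ is small and relative deviations must be bounded more delicately.

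The second step is to analyse the deterministic ``expected polynomial''
\[ \hat p_n(x) \;=\; \sum_{k=0}^{N} \binom{n}{k}\, 2^{-\binom{k}{2}}\, x^k, \]
where $N$ is chosen slightly above the typical independence number. The goal is to show $\hat p_n$ has all its roots in the sector of Corollary~\ref{corSLC}; small calculations (for $n=2,3$ all roots already lie on the negative real axis) suggest that $\hat p_n$ is even real-rooted. A direct attack through Kurtz's criterion that $a_k^2 \ge 4 a_{k-1}a_{k+1}$ implies real-rootedness, evaluated at the means, gives
\[ \frac{\mathbb{E}[i_k]^2}{\mathbb{E}[i_{k-1}]\,\mathbb{E}[i_{k+1}]} \;=\; 2\cdot\frac{(k+1)(n-k+1)}{k(n-k)}, \]
which sits just above $2$ rather than $4$, so some finer structural input is needed. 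Promising candidates include: a comparison with the real-rooted Rogers--Szeg\H{o}-type polynomial $\prod_{i=0}^{n-1}(1+2^{-i}x)$ through a controlled passage from Gaussian to classical binomial coefficients; a Hermite--Biehler argument on an auxiliary interlacing polynomial; or a splitting of the coefficient range into a ``small $k$'' piece (where $\binom{n}{k}$ dominates) and a ``large $k$'' piece (where the $2^{-\binom{k}{2}}$ factor dominates), with different real-rootedness criteria applied on each.

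The third step is to transfer the root locations from $\hat p_n$ to the random polynomial $i(G,x)$. Once the roots of $\hat p_n$ are shown to lie at a uniform distance from the boundary rays $\arg z = \pm 2\pi/3$, a Rouch\'e-type argument on small contours encircling each root converts the coefficient concentration of Step~1 into a high-probability statement that the roots of $i(G,x)$ also lie inside the sector. Strict log-concavity, and therefore unimodality, then follows at once from Corollary~\ref{corSLC}.

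I expect Step~2 to be the main obstacle: the coefficient ratios above sit right on the boundary of what standard Newton- and Kurtz-style criteria can deliver, so a genuinely new piece of structural information about $\hat p_n$ will be needed. A secondary difficulty arises near the leading edge of the polynomial, where $\mathbb{E}[i_k]$ is very small and the relative fluctuations of $i_k$ are large; here the Rouch\'e step in Step~3 has to be executed with extra care, and the fact that the actual degree $\alpha(G)$ is random introduces a handful of ``extra'' roots whose behaviour must be tracked separately from those of $\hat p_n$.
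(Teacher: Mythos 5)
This statement is one of the paper's concluding \emph{conjectures}: the authors offer no proof, only computational evidence (for instance, among the $11,117$ connected graphs of order $8$ only $40$ independence roots fall outside the sector, Figure~\ref{order8}), so there is no argument in the paper to compare yours against. Judged on its own terms, what you have written is a research programme rather than a proof, and the decisive gap is the one you flag yourself: Step~2, the claim that the expected polynomial $\hat p_n(x)=\sum_k \binom{n}{k}2^{-\binom{k}{2}}x^k$ is real-rooted, or at least has all roots in the sector of Corollary~\ref{corSLC}, is never established. Your own computation shows the Newton/Kurtz-type coefficient ratios sit near $2$, below the threshold $4$, so the standard sufficient criteria cannot close this, and none of the proposed substitutes (comparison with $\prod_{i}(1+2^{-i}x)$, Hermite--Biehler, splitting the coefficient range) is carried out. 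Without Step~2 nothing downstream exists to transfer.

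The auxiliary steps are also not as routine as presented. For Step~1, $i_k(G)$ is a polynomial of degree $\binom{k}{2}$ in the edge indicators, and for $k$ of order $\log n$ the worst-case effect of re-exposing one vertex is as large as $\binom{n-1}{k-1}$, which dwarfs $\mathbb{E}[i_k]$; plain Azuma with vertex exposure therefore gives nothing, and the relevant concentration (especially upper tails of subgraph-type counts) is a genuinely delicate problem requiring Kim--Vu/Janson-type machinery, not a supplement ``for the larger values of $k$.'' For Step~3, a Rouch\'e argument on small contours around the roots of $\hat p_n$ only controls the bulk roots. The roots of largest modulus of $i(G,x)$ are governed by the top few coefficients, exactly where $\mathbb{E}[i_k]=O(1)$, the relative fluctuations are of constant order, and $\alpha(G)$ itself is only two-point concentrated, so the degree of $i(G,x)$ need not even match that of $\hat p_n$; the ``handful of extra roots'' you mention are precisely the ones a sector statement must control, and no mechanism is proposed for placing them in $\{z:\tfrac{2\pi}{3}<|\mathrm{arg}(z)|<\tfrac{4\pi}{3}\}$. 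So the proposal, as it stands, leaves the conjecture open at every one of its three stages, with Step~2 the most fundamental obstruction.
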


The same seems to be true of trees as well, and in fact we have not been able to find a single independence root of a tree in the right half-plane (see Figure~\ref{trees14}). so we also conjecture the following:

\begin{conj}
The independence polynomials of trees $G$ are {\it stable}, that is, their roots lie in the left half-plane. 
\end{conj}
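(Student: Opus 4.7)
The plan is to set up an induction on $|V(T)|$. For a leaf $v$ adjacent to $u$ in $T$, the standard leaf-deletion recurrence reads
\[ i(T,x) = i(T-v,x) + x \cdot i(T-v-u,x), \]
and both $T-v$ and $T-v-u$ are forests whose components are smaller trees. Since Hurwitz stability is preserved under products of polynomials with positive leading coefficient, the inductive hypothesis would give that each of $i(T-v,x)$ and $i(T-v-u,x)$ is stable. The problem then reduces to showing that $p(x)+x\,q(x)$ is stable whenever $p$ and $q$ are the particular stable forest polynomials arising from this recurrence.

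This is where a naive induction collapses, because the sum of two Hurwitz stable polynomials is not stable in general. Following the Borcea--Br\"and\'en stability calculus, the natural remedy is to strengthen the inductive statement to a multivariate one. Define the multivariate independence polynomial
\[ I(T;\mathbf{x}) = \sum_{\substack{S \subseteq V(T) \\ S \text{ independent}}} \prod_{v \in S} x_v, \]
and aim to prove that $I(T;\mathbf{x})$ is real stable, i.e.\ has no zero when every $x_v$ lies in the open upper half-plane. The leaf-deletion identity lifts cleanly to
\[ I(T;\mathbf{x}) = I(T-v;\mathbf{x}) + x_v \cdot I(T-v-u;\mathbf{x}), \]
and the univariate conclusion of the conjecture would then follow by specializing $x_v=x$ and applying the change of variable $z \mapsto iz$ that converts upper-half-plane stability of a real polynomial to Hurwitz stability.

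The principal obstacle is the inductive step for the multivariate statement. One would seek to exhibit the forest polynomials $I(T-v;\mathbf{x})$ and $I(T-v-u;\mathbf{x})$ as a \emph{proper pair} (a common-interlacer compatible pair) so that a Borcea--Br\"and\'en operator preservation theorem applies to the map $(p,q) \mapsto p + x_v q$. For matching polynomials the analogous compatibility is supplied by the Heilmann--Lieb theorem, whose inductive proof rests on an explicit interlacing tied to edge deletion; for independence polynomials of trees no such clean interlacing is known, and isolating an invariant compatibility condition preserved by leaf deletion is the main technical hurdle. Given that even the weaker univariate unimodality conjecture of Alavi et al.\ for trees is itself still open, we expect that genuinely new structural input into the distribution of independent sets of trees -- perhaps via a centroid decomposition of $T$, or a correlation inequality analogous to the FKG-type estimates used in the matching case -- will be needed before the multivariate induction can be closed.
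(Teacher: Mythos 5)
You should first be aware that this statement is posed in the paper as an open conjecture: the authors offer only computational evidence (independence roots of all trees of order $14$ lying in the left half-plane), and no proof. So there is no argument of theirs to compare against; the question is whether your outline could close the problem, and as written it cannot. You candidly flag that the inductive step --- exhibiting $I(T-v;\mathbf{x})$ and $I(T-v-u;\mathbf{x})$ as a compatible pair so that $(p,q)\mapsto p+x_v q$ preserves stability --- is unresolved, so the proposal is a research plan rather than a proof. That alone would be the verdict, but there is a more serious structural flaw.

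The multivariate strengthening you propose is false, so the plan cannot be repaired in its current form. If $I(T;\mathbf{x})$ were nonvanishing whenever every $x_v$ lies in the open upper half-plane, then the diagonal specialization $x_v=x$ (which preserves that nonvanishing) would make $i(T,x)$ a real univariate polynomial with no root in the open upper half-plane, hence real-rooted, since non-real roots of a real polynomial come in conjugate pairs. But the claw $K_{1,3}$ is a tree with $i(K_{1,3},x)=x^3+3x^2+4x+1$; its derivative $3x^2+6x+4$ has negative discriminant, so $i(K_{1,3},x)$ is strictly increasing on $\real$ and has exactly one real root together with a conjugate pair of non-real roots. Thus upper-half-plane (Borcea--Br\"and\'en real) stability already fails for a $4$-vertex tree. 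The conjecture asserts Hurwitz stability (roots with negative real part), which is strictly weaker than real-rootedness and is the only notion compatible with trees; your final reduction via the rotation $z\mapsto iz$ does not bridge the two, because the rotated polynomial no longer has real coefficients and the half-plane that must be zero-free is reversed (a Hurwitz stable real polynomial like $x^2+x+1$ is not upper-half-plane stable in your sense). Any viable multivariate approach would have to target left half-plane nonvanishing directly (a multivariate Hurwitz-type statement, or a Hermite--Biehler/interlacing formulation adapted to the leaf-deletion recurrence $i(T,x)=i(T-v,x)+x\,i(T-v-u,x)$), and even then the compatibility input you identify as missing would still have to be supplied.
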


\vskip0.2in
\noindent {\bf \large Acknowledgements:} The first author would like to acknowledge the support of the Natural Sciences and Engineering Research Council of Canada (grant number $170450$--$2013$). 


\bibliographystyle{plain}
\bibliography{VWC}

\end{document}